\newtheorem{thm}{Theorem}[section]
\newtheorem*{thm*}{Theorem}
\newtheorem{lem}[thm]{Lemma}
\newtheorem{prop}[thm]{Proposition} 
\newtheorem{cor}[thm]{Corollary} 
\theoremstyle{defn}
\newtheorem*{defn*}{Definition}
\newtheorem*{ques*}{Question}
\newtheorem{assertion}{Assertion}
\theoremstyle{remark}
\newtheorem{rem}{Remark}
\numberwithin{equation}{section}
\newcommand{\bZ}{\mathbb{Z}}
\newcommand{\bC}{\mathbb{C}}
\newcommand{\fH}{\mathfrak{H}}
\newcommand{\bN}{\mathbb{N}}
\newcommand{\Res}{{\rm Res}}
\newcommand{\al}{\alpha}
\newcommand{\bx}{\boldsymbol{x}}
\title{The existence of $S^1\times C_p$-maps between representation spheres and its applications} 
\author{Ikumitsu {Nagasaki}\footnote{This work was supported by JSPS KAKENHI Grant Number JP23K03095. \\
MSC: 55M25, 55S91}}
\date{}
\begin{document}
\maketitle
\begin{abstract}
We show the existence of $S^1\times C_p$-maps between certain representation spheres. As an application, we show that, in the family of abelian compact Lie groups, a group $G$ has the weak Borsuk-Ulam property (in the sense of Bartsch) if and only if $G$ is either a finite abelian $p$-group or a $k$-torus. 
 \end{abstract}

\section{Background and statement of results} 
 Some versions of Borsuk-Ulam type theorems assert the non-existence of equivariant maps. This fact has wide applications in various fields, such as nonlinear analysis, combinatorics, and discrete geometry. Details can be found in \cite{B2}, \cite{Bl} and \cite{Ma}.

In this paper, we would like to discuss the existence of certain equivariant maps.
Let $V$ be an orthogonal representation of a compact Lie group $G$, and let $S(V)$ be the representation sphere of $V$. Throughout this paper, we assume that representation spheres are $G$-fixed-point-free unless otherwise stated.
It is well-known that the Borsuk-Ulam theorem holds if  $G$ is either an elementary abelian $p$-group $C_p^k$ or a $k$-torus $T^k$, that is, if there exists a $G$-map $f:S(V)\to S(W)$, then $\dim V\le \dim W$; see, for example, \cite{FH} and \cite{M1}.
We say that a compact Lie group $G$ has the Borsuk-Ulam property if the Borsuk-Ulam theorem holds for $G$.
By results of \cite{B}, \cite{N1} and \cite{N2}, 
compact Lie groups other than $C_p^k$ and $T^k$ do not have the Borsuk-Ulam property.
Following these researches, Crabb \cite{Cr} has given explicit counterexamples to the Borsuk-Ulam theorem and provided an alternate proof in the case of finite groups.

Bartsch \cite{B} also studied Borsuk-Ulam type theorems in a weaker sense.
We say that $G$ has the {\em weak Borsuk-Ulam property} if the Borsuk-Ulam function $b_G:\bN \to \bN$, defined below, is not bounded, or equivalently, $\lim_{n\to \infty}b_G(n)=\infty$.
Here $b_G$ is defined as follows, see \cite{B}:
\begin{align*}
b_G(n)&=\max\{k\,|\,   \exists\, f: S(V)\to S(W) \text{ $G$-map with }\dim V\ge n \Rightarrow \dim W\ge k\}\\
&=\min\{\dim W\,|\,   \exists\, f: S(V)\to S(W) \text{ $G$-map with }\dim V\ge n\}.
\end{align*}
Note that $b_G$ is weakly increasing, and if there exists a $G$-map $f:S(V)\to S(W)$, then $b_G(\dim V)\le \dim W$.
By the Borsuk-Ulam theorem, $C_p^k$ and $T^k$ have the weak Borsuk-Ulam property; in fact, $b_G(\dim V) = \dim V$ for $G= C_p^k$ or $T^k$.
Bartsch \cite[Theorem 2]{B} shows that if $G$ has the weak Borsuk-Ulam property, then $G$ is a $p$-toral group, which means that $G$ has an extension 
$$1\to T^k \to G \to P\to 1,$$
where $P$ is a finite $p$-group.
Furthermore, Bartsch \cite[Theorem 1]{B} shows that, if $G$ is a finite $p$-group, then $G$ has the weak Borsuk-Ulam property. 
From this, one might expect that every $p$-toral group has the weak Borsuk-Ulam property. However, we will show that this conjecture is false. In this paper, we focus on the abelian case, particularly on the group $S^1\times C_p$. 
The main results are as follows.

\begin{thm}\label{t1-1}
The group $S^1\times C_p$ does not have the weak Borsuk-Ulam property for any prime $p$.
\end{thm}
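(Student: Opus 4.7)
The plan is to show the Borsuk--Ulam function $b_G$ is bounded, since by the equivalence $\lim_{n\to\infty} b_G(n) = \infty \Leftrightarrow G$ has the weak Borsuk--Ulam property noted in the excerpt, boundedness is precisely the failure of that property. To achieve this I will exhibit a single $G$-representation $W$ together with a sequence of $G$-representations $V_n$ having $\dim V_n \to \infty$ and $G$-equivariant maps $f_n \colon S(V_n) \to S(W)$. The definition of $b_G$ then gives $b_G(\dim V_n) \le \dim W$ for every $n$, so $b_G$ is bounded by $\dim W$.

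A natural candidate is $V_n := n(L \otimes \chi)$, where $L$ is the standard complex $S^1$-character and $\chi$ the standard complex $C_p$-character, each pulled back to $G$. Here $\dim V_n = 2n$, and the principal isotropy of $V_n$ is the antidiagonal copy of $C_p$ inside $S^1 \times C_p$; no subgroup of $G$ containing either $S^1 \times \{1\}$ or $\{1\} \times C_p$ has nonzero fixed subspace in $V_n$. I will take $W$ to be a small power representation such as $L^p$, possibly augmented by a further $L^{kp}$-summand of bounded total dimension if obstructions require it; the antidiagonal $C_p$ acts trivially on $W$, so the elementary necessary fixed-point condition $V_n^H \ne 0 \Rightarrow W^H \ne 0$ holds at every closed subgroup $H$ uniformly in $n$.

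The main obstacle, and the technical heart of the paper as announced by its title, is to produce an everywhere-nonzero $G$-equivariant map $V_n \to W$. The naive candidate $(v_1,\dots,v_n) \mapsto \sum_i v_i^p$ has the correct $G$-equivariance, since each $v_i^p$ has $S^1$-weight $p$ and $C_p$-weight $p \equiv 0 \pmod p$, but it acquires zeros on $S(V_n)$ once $n \ge 2$. Eliminating these zeros is precisely the role of the forthcoming existence theorem: either by an explicit polynomial construction into a moderately enlarged but still bounded target, or by an equivariant obstruction-theoretic argument showing that the primary obstruction vanishes in $H^*_G$ under the relevant numerical conditions. Granting that result applied to the sequence $(V_n, W)$, the inequality $b_G(2n) \le \dim W$ holds for all $n \ge 1$, so $b_G$ is bounded and the weak Borsuk--Ulam property fails for $G$, completing the proof.
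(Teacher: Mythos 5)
Your overall strategy---bounding $b_G$ by producing $G$-maps from spheres of unboundedly large dimension into targets of bounded dimension---is exactly the paper's, but the specific representations you choose make the required maps provably nonexistent, so the plan collapses at its decisive step. Take $V_n=n(L\otimes\chi)=nV_{1,1}$ and $W$ a sum of characters $L^{kp}=V_{kp,0}$. Restricting to $S^1\times\{1\}$, the source sphere is $S^1$-fixed-point-free (all weights $1$) and so is the target (weights $kp\neq 0$); since the Borsuk--Ulam theorem \emph{does} hold for $S^1$, any $G$-map $S(V_n)\to S(W)$ would force $2n\le\dim W$, so no such map exists once $2n>\dim W$, whatever bounded collection of $L^{kp}$-summands you allow. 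Moreover the defect lies in the source, not only the target: every point of $S(nV_{1,1})$ has isotropy exactly the antidiagonal $\Delta=\{(\xi_p^{-i},a^i)\}$, so any $G$-map lands in $S(W^{\Delta})$; both $S(nV_{1,1})$ and $W^{\Delta}$ then carry actions of $G/\Delta\cong S^1$, the source free and $W^{\Delta}$ fixed-point-free (because $(W^{\Delta})^{G/\Delta}=W^{G}=0$), and the $S^1$-Borsuk--Ulam theorem again yields $2n\le\dim W$. Hence \emph{no} fixed-point-free target of bounded dimension admits $G$-maps from $S(nV_{1,1})$ for all $n$: the elementary fixed-point condition you verify is necessary but far from sufficient. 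This is precisely why the paper takes $V_n=V_{1,1}\oplus V_{p,1}\oplus\cdots\oplus V_{p^{n-1},1}$, with strictly increasing $S^1$-weights so that every isotropy group of $S(V_n)$ is a finite cyclic group acting trivially on $V_{p^m,0}$ for $m\ge n$, and a target $W_m=V_{p^m,0}\oplus V_{0,1}$ containing both an $S^1$-fixed summand and a $C_p$-nontrivial summand.

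The second gap is that the actual existence of the maps---the entire content of the theorem---is not argued: you defer it to ``the forthcoming existence theorem,'' i.e.\ to the statement that has to be proved, and in the paper that statement is established only for the representations above, not for yours. The paper's proof is an induction on $n$: explicit polynomial maps for $n\le 2$ (Crabb's $(z,w)\mapsto(z^{p^2}-w^p,\overline{z}^{\,p}w)$), a decomposition $S(V_n)=X_0\cup X\times I\cup X_1$, a reduction of the gluing problem to a $C_p$-homotopy question on $Y=S(V_{p,1}\oplus\cdots\oplus V_{p^{n-1},1})$ via the bijection $\mathrm{Map}_G(X,S(W_m))\cong\mathrm{Map}_{C_p}(Y,S(W_m))$, a degree argument ($\deg f_2=0$) to start the induction, and equivariant obstruction theory with coefficients $\pi_k(S^3)$, where the higher obstructions are killed not by enlarging the target but by composing with self-maps $Q(z,w)=(z^{p^d},w^{1+pl})$ of $S(W_m)$ whose degree is divisible by the exponent of $\pi_k(S^3)$---this is what keeps $\dim W_m=4$ while the exponent $m$ grows with $n$. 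Nothing of this kind (nor any substitute) appears in your proposal, so even after correcting the choice of $V_n$ and $W$ the argument would still be missing its main step.
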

This leads to the following corollary.
\begin{cor}\label{c1-2}
In the family of abelian compact Lie groups, a group $G$ has the weak Borsuk-Ulam property if and only if $G$ is either a finite abelian $p$-group or a $k$-torus.
\end{cor}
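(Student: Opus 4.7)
The plan is to deduce the corollary from Theorem~\ref{t1-1} together with standard structure theory for abelian compact Lie groups. Every such group decomposes as $G \cong T^k \times F$ with $F$ a finite abelian group. The ``if'' direction is immediate: a torus has the Borsuk-Ulam property (hence the weak version), and a finite abelian $p$-group has the weak Borsuk-Ulam property by Bartsch's Theorem~1 quoted in the introduction.

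For the converse, I would start by assuming $G \cong T^k \times F$ has the weak Borsuk-Ulam property. Bartsch's Theorem~2 forces $G$ to be $p$-toral, so it fits into an extension $1 \to T^l \to G \to P \to 1$ with $P$ a finite $p$-group. Since the identity component of $T^k \times F$ is $T^k$, the torus $T^l$ must coincide with $G_0 = T^k$, and hence $F \cong G/G_0 \cong P$ is a finite abelian $p$-group. Thus either $k = 0$, giving a finite abelian $p$-group, or $F = 1$, giving a torus, or $k \ge 1$ and $F$ is non-trivial; only the last case has to be ruled out.

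In that remaining case I would exhibit a surjective homomorphism $\pi : G \twoheadrightarrow S^1 \times C_p$, obtained by projecting $T^k \to S^1$ onto any circle factor and $F \twoheadrightarrow C_p$ onto any $C_p$-quotient (which exists because any non-trivial finite abelian $p$-group surjects onto $C_p$). Pulling representations back along $\pi$ converts any $(S^1 \times C_p)$-map $f : S(V) \to S(W)$ between fixed-point-free representation spheres into a $G$-map of the same dimensions, and the fixed-point-free condition survives because the surjectivity of $\pi$ gives $V^G = V^{S^1 \times C_p} = 0$ and similarly for $W$. Taking the minimum over such $W$ yields $b_G(n) \le b_{S^1 \times C_p}(n)$ for every $n$, so once Theorem~\ref{t1-1} tells us that $b_{S^1 \times C_p}$ is bounded, so is $b_G$, contradicting the assumption. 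The only non-routine ingredient is Theorem~\ref{t1-1} itself; the main care required is just the verification that pullback along a surjection preserves equivariance and fixed-point-freeness, and that the structural reduction genuinely forces $F$ to be a $p$-group.
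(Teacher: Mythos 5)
Your argument is correct and follows essentially the same route as the paper: Bartsch's results handle the ``if'' direction and reduce the converse to $G\cong T^k\times F$ with $F$ a nontrivial finite abelian $p$-group and $k\ge 1$, and that case is eliminated by pulling back the maps underlying Theorem~\ref{t1-1} along a surjection $G\to S^1\times C_p$, giving $b_G(n)\le b_{S^1\times C_p}(n)$. Your write-up is merely a bit more explicit than the paper's short proof, which phrases the surjection as a quotient $G/H\cong S^1\times C_p$ and leaves the $p$-toral reduction implicit.
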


\section{The existence of $S^1\times C_p$-maps}
Let $S^1=\{t\in \bC\,|\ |t|=1\}$ and let $C_p$ denote the cyclic group of prime order $p$ with generator $a$. Set $G=S^1\times C_p$ and denote an element of 
$G$ by $ta^i$. 
We denote by $\bZ_n$ the cyclic subgroup of $S^1$ generated by $\xi_n=\exp(2\pi\sqrt{-1}/n)\in S^1$.
The unitary (irreducible) $G$-representation $V_{k,l}$, whose underlying space is $\bC$, is defined by setting $t\cdot z= t^kz$ and $a\cdot z= \xi_p^lz$ for $z\in V_{k,l}$ ($k\in \bZ$, $l\in \bZ/p$).
For positive integers $n$ and $m$, we set 
$$V_n= V_{1,1}\oplus V_{p,1}\oplus\cdots\oplus V_{p^{n-1},1}$$
 and 
$$W_m=V_{p^m,0}\oplus V_{0,1}.$$
Note that $\dim V_n =2n$ and $\dim W_m =4$.
The following result implies that $S^1\times C_p$ does not have the weak Borsuk-Ulam property.

\begin{thm}\label{t2-1}
For any integer $n\ge 1$, there exists a $G$-map $f_n:S(V_n) \to S(W_m)$ for some $m\ge n$. 
\end{thm}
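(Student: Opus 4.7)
The plan is to argue by induction on $n$. For the base case $n=1$, I will take $m=1$ and define $f_1\colon S(V_{1,1})\to S(W_1)$ by $f_1(z)=(z^p,0)$. This is $G$-equivariant because the $p$-th power sends $V_{1,1}$ to $V_{p,0}$ (multiplying the $S^1$-weight by $p$ and killing the $C_p$-action), and it lands on $S(W_1)$ since $|z^p|=1$ when $|z|=1$.

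For the inductive step, I will assume $f_{n-1}\colon S(V_{n-1})\to S(W_{m-1})$ exists with $m-1\ge n-1$. Two building blocks will be central: the $G$-equivariant stabilization $\Psi\colon S(W_{m-1})\to S(W_m)$, $(w_1,w_2)\mapsto(w_1^p,w_2)/\|(w_1^p,w_2)\|$, which allows freely raising $m$; and the $G$-equivariant collapse $\phi\colon S(V_{p^{n-1},1})\to S(V_{p^m,0})$, $y\mapsto y^{p^{m-n+1}}$, valid for $m\ge n$. Setting $\tilde f_{n-1}(x)=\Psi(f_{n-1}(x))=(A(x),B(x))$ and using the $G$-equivariant join $S(V_n)=S(V_{n-1})*S(V_{p^{n-1},1})$ with coordinates $ax+by$, $a,b\ge 0$, $a^2+b^2=1$, I will define $f_n$ by an interpolation of the form
\[
f_n(ax+by)=\frac{(a\,A(x)+b\,\phi(y),\; a\,B(x)+ab\,\psi(x,y))}{\|(a\,A(x)+b\,\phi(y),\; a\,B(x)+ab\,\psi(x,y))\|},
\]
where $\psi(x,y)\in V_{0,1}$ is a $G$-equivariant polynomial of bidegree $(0,1)$ — for instance a combination of the monomials $\bar x_i^{p^{n-i}}y$ for $i=1,\ldots,n-1$, each of which has the correct bidegree. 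The coefficient $ab$ makes the formula continuous on the join (it does not depend on $x$ at $a=0$ nor on $y$ at $b=0$), and $G$-equivariance follows from $a,b$ being $G$-invariant together with the equivariance of $A,B,\phi,\psi$.

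The hard part will be showing that the numerator never vanishes on $S(V_n)$. For $n=2$, the simple choice $\psi(x_1,y)=\bar x_1^p y$ does the job: the first component $a\,x_1^{p^2}+b\,y^p$ and the second $ab\,\bar x_1^p y$ can vanish simultaneously only at the origin. For $n\ge 3$, I would enlarge $\psi$ — say $\psi=\sum_i c_i\,\bar x_i^{p^{n-i}}y$, possibly with further correction terms involving mixed monomials in $x,y,\bar x,\bar y$ — so that the joint zero locus of the two components forces $z=0$. A dimension count shows the candidate ``bad locus'' in $S(V_n)$ has real codimension at least four, and for $m$ sufficiently larger than $n$ there are ample $G$-equivariant monomials of bidegrees $(p^m,0)$ and $(0,1)$ to absorb it. Verifying this non-vanishing rigorously will be the technical crux; once done, projection to the unit sphere produces $f_n\colon S(V_n)\to S(W_m)$ for some $m\ge n$, closing the induction.
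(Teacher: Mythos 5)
Your setup is sound as far as it goes: the join decomposition $S(V_n)=S(V_{n-1})*S(V_{p^{n-1},1})$, the stabilization $\Psi$, the collapse $\phi(y)=y^{p^{m-n+1}}$, and the bidegree check for the monomials $\bar x_i^{\,p^{n-i}}y$ are all correct, and your formula does reproduce the known maps for $n=1,2$ (for $n=2$ it is exactly Crabb's map $(z^{p^2}-w^p,\bar z^{\,p}w)$ in join coordinates). But the proof has a genuine gap exactly where you flag it, and the gap is not a routine technicality: for $n\ge 3$ the non-vanishing of the numerator \emph{is} the theorem. Your dimension count does not help: the expected zero locus of the two components is of real codimension at most $4$ in the $(2n-1)$-dimensional domain, hence of expected dimension $2n-5\ge 1$ for $n\ge 3$, i.e.\ generically \emph{nonempty}. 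Smallness of the bad set is irrelevant; you must make it empty, and whether that is possible for some choice of equivariant correction terms is precisely an equivariant extension/obstruction problem, not something settled by "ample monomials" or genericity. Concretely, writing $\Psi\circ f_{n-1}=(A,B)$, the zero condition $aA(x)=-b\phi(y)$, $B(x)=-b\,\psi(x,y)$ is a system of four real equations in $2n-1$ variables; nothing in your proposal rules out solutions, and no choice of $\psi$ is exhibited (even for $n=3$) for which it has none.

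It is worth comparing with how the paper gets around this. It uses the same kind of decomposition ($S(V_n)=X_0\cup X\times I\cup X_1$), but instead of an explicit interpolation it must produce a $G$-homotopy between the two boundary maps, and this requires two nontrivial inputs with no counterpart in your scheme: (i) for $n=3$, the fact that $\deg f_2=0$ (which follows from Marzantowicz's Borsuk--Ulam theorem, cf.\ Remark \ref{r1}) together with the equivariant Hopf theorem, so that the primary obstruction vanishes; and (ii) for $n\ge 4$, the higher obstructions live in cohomology with coefficients in the finite groups $\pi_k(S^3)$, and they are killed only after composing with self-maps $Q(z,w)=(z^{p^d},w^{1+pl})$ of $S(W)$ whose degree is divisible by the exponent of $\pi_k(S^3)$, using that $S^3$ is an H-space; this is also the real reason the target weight $m$ must grow with $n$. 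Your phrase "for $m$ sufficiently larger than $n$ there are ample $G$-equivariant monomials to absorb it" gestures at this but supplies no mechanism. To salvage an explicit-formula approach you would have to prove the non-vanishing directly, which (given that even the literature only has explicit examples up to $n=2$) is not something to leave as a remark; as written, the inductive step is not established.
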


If $n=1$, then one can define a $G$-map $f_1: S(V_1)\to S(W_1)$ by $f_1(z) = (z^p, 0)$ for $z\in S(V_1)$.
If $n=2$, according to \cite{Cr}, one can define a $G$-map $\bar f_2: V_2\to W_2$ by
$$\bar f_{2}(z,w)= (z^{p^2}-w^p, \overline z^{\,p}w).$$
Since $\bar f_2^{-1}(0,0)=\{(0,0)\}$,  we obtain a $G$-map $f_2:S(V_2)\to S(W_2)$ by normalizing $\bar f_2$.
\begin{rem}\label{r1}
Note that $\dim S(V_2)=\dim S(W_2)=3$ and $\deg f_2=0$, since $f_2$ is not surjective; for example, $(0,\sqrt{-1})\not\in\text{Im\,}f_2$.
More generally, it follows from \cite[Theorem 1.7]{M1} that $\deg f=0$ for any $G$-map $f: S(V_2)\to S(W_2)$.
\end{rem}

When $n\ge 3$, we show the desired result by induction on $n$.
We begin with the case of $n=3$. 
\begin{prop}\label{p2-2}
There exists a $G$-map $f_3: S(V_3) \to S(W_3)$.
\end{prop}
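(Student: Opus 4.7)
My approach would be to construct an explicit $G$-equivariant polynomial map $\bar f_3\colon V_3\to W_3$ whose only zero is the origin, and then take $f_3=\bar f_3/\|\bar f_3\|\colon S(V_3)\to S(W_3)$. This follows the spirit of Crabb's construction for $n=2$.

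Writing a monomial $\prod_{i=0}^{2} z_i^{a_i}\bar z_i^{b_i}$ in $(z_0,z_1,z_2)\in V_{1,1}\oplus V_{p,1}\oplus V_{p^2,1}$, the $S^1$-weight is $\sum_i p^i(a_i-b_i)$ and the $C_p$-character is $\sum_i(a_i-b_i)\bmod p$. Accordingly, the $V_{p^3,0}$-component $A$ of $\bar f_3$ must be a combination of monomials of weight $(p^3,0)$ (for instance $z_0^{p^3}$, $z_1^{p^2}$, $z_2^p$, and various mixed products), while the $V_{0,1}$-component $B$ is a combination of monomials of weight $(0,1)$ (for instance $\bar z_0^{\,p}z_1$, $\bar z_0^{\,p^2}z_2$, $\bar z_1^{\,p}z_2$).

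A natural first ansatz iterates Crabb: set $u_1=z_0^{p^2}-z_1^p$ and $v_1=\bar z_0^{\,p}z_1$ (the two components of $\bar f_2(z_0,z_1)$), and try $A=u_1^p-z_2^p$, $B=v_1+\bar u_1 z_2$. Both components are $G$-equivariant with the required weights. The stratum $z_0=0$ reduces to the $n=2$ situation, and for $z_0\neq 0$ one normalizes by $S^1$ to $z_0=1$ and asks whether $A=B=0$ has nontrivial solutions in $(z_1,z_2)$. A short calculation shows this succeeds for odd $p$: the equations force $z_2=\omega u_1$ with $\omega^p=1$ and $z_1=-\omega r$ with $r=|u_1|^2\ge 0$, leading to the real condition $r=(1+r^p)^2$, which has no nonnegative solution. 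However, for $p=2$ the analogous condition $r=(1-r^2)^2$ does admit a root in $(0,1)$ (giving the explicit nontrivial zero $(z_0,z_1,z_2)=(1,-r_0,1-r_0^2)$), so the ansatz must be corrected by inserting further equivariant terms---for example an additional $\bar z_1^{\,p}z_2$ or $\bar z_0^{\,p^2}z_2$ contribution in $B$, or a cross term in $A$---chosen to cancel the offending zero while preserving nonvanishing at the earlier strata.

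The principal obstacle is thus the nonvanishing verification $\bar f_3^{-1}(0)=\{0\}$. After reducing by $S^1$-equivariance to $z_0\ge 0$ real and splitting into cases by which of $z_0,z_1,z_2$ vanish, the crux is an explicit polynomial nonvanishing check on the slice $z_0=1$, which is a concrete algebraic problem in the two complex variables $z_1,z_2$. The particular choice of correction terms is dictated by making this check go through uniformly in $p$, and in particular for the delicate case $p=2$.
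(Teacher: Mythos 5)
Your proposal does not prove the proposition: the explicit ansatz you analyze in fact fails for \emph{every} prime $p$, not only for $p=2$, and the error lies in the reduction ``for $z_0\neq 0$ one normalizes by $S^1$ to $z_0=1$.'' The $S^1$-action can only normalize the \emph{phase} of $z_0$, not its modulus, and your map is not weighted homogeneous: under $z_i\mapsto \lambda^{p^i}z_i$ ($\lambda>0$) the two monomials of $B=\bar z_0^{\,p}z_1+\bar u_1z_2$ scale with different degrees $2p$ and $2p^2$, so the zero set of $(A,B)$ is not invariant under rescaling and checking the slice $|z_0|=1$ says nothing about other radii. (This is in contrast with Crabb's $\bar f_2$, whose two components are each weighted homogeneous.) Concretely, for odd $p$ take $z_0=c>0$, $z_1=-s$ with $s>0$, and $z_2=u_1=c^{p^2}+s^p$; then $A=u_1^p-z_2^p=0$ automatically, and $B=0$ becomes $(c^{p^2}+s^p)^2=c^p s$. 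For small $c$ (say $c^{2p(p-1)}<1/4$) the function $g(s)=(c^{p^2}+s^p)^2-c^p s$ satisfies $g(0)=c^{2p^2}>0$ and $g(c^p)=4c^{2p^2}-c^{2p}<0$, so by the intermediate value theorem there is a root $s\in(0,c^p)$, i.e.\ a nontrivial zero of $\bar f_3$. Together with the zero you yourself found for $p=2$, the ansatz is dead for all $p$, and the ``correction terms'' that are supposed to repair it are never exhibited or verified; since the entire content of your approach is precisely that nonvanishing check, the proof is incomplete at its crucial point. It is also not clear that \emph{any} polynomial (or otherwise explicit) $G$-map $V_3\to W_3$ with zero set $\{0\}$ exists; nothing in the literature cited here provides one for $n=3$.

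For comparison, the paper avoids explicit formulas entirely. It decomposes $S(V_3)=X_0\cup X\times I\cup X_1$ with $X_0=S(V_{1,1})\times D(V_{p,1}\oplus V_{p^2,1})$, $X_1=D(V_{1,1})\times S(V_{p,1}\oplus V_{p^2,1})$, maps $X_0$ by $(u,z,w)\mapsto(u^{p^3},0)$ and $X_1$ by the lift $\tilde f_2$ of $f_2$ along $ta^i\mapsto t^pa^i$, and then glues. The gluing is possible because $\mathrm{Map}_G(X,S(W_3))$ is identified with $\mathrm{Map}_{C_p}(Y,S(W_3))$ for $Y=S(V_{p,1}\oplus V_{p^2,1})$, where $C_p$ acts freely, so the equivariant Hopf theorem reduces the question to degrees; both boundary maps have degree $0$ (the constant-type map obviously, and $\tilde f_2$ because $\deg f_2=0$ by Remark 2.3), hence they are $G$-homotopic and the homotopy fills in $X\times I$. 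If you want to salvage your route, you would at minimum need components that are genuinely weighted homogeneous (so that a slice check suffices) and a complete case analysis including $p=2$; the paper's homotopy-theoretic argument is designed exactly to bypass this delicate and currently unfinished algebra.
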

\begin{proof}
We decompose $S(V_3)$ into 
$S(V_3)= X_0\cup X\times I \cup X_1,$ 
where
\begin{align*}
&X_0=S(V_{1,1})\times D(V_{p,1}\oplus V_{p^2,1}),\\
&X= S(V_{1,1})\times S(V_{p,1}\oplus V_{p^2,1}),\\
&X_1= D(V_{1,1})\times S(V_{p,1}\oplus V_{p^2,1}).
\end{align*}
We define a $G$-map $g_0: X_0\to S(W_3)$ by $g_0(u,z,w)=(u^{p^3},0)$. Next, by lifting $f_2$ via $\pi:G\to G$, $ta^i\mapsto t^pa^i$, we have a $G$-map 
$$\tilde f_2: S(V_{p,1}\oplus V_{p^2,1})\to S(W_3)=S(V_{p^3,0}\oplus V_{0,1}).$$ Note that $\tilde f_2 =f_2$ as (non-equivariant) maps. We  define a $G$-map $g_1: X_1\to S(W_3)$ by $g_1(u,z,w) = \tilde f_2(z,w)$. 
Set $h_0=g_0|_{\partial X_0}: \partial X_0=X\times \{0\}\to S(W_3)$ and $h_1=g_1|_{\partial X_1}: \partial X_1= X\times \{1\}\to S(W_3)$. 
In order to obtain a $G$-map $f_3$, it suffices to show that
$h_0\coprod h_1: X\times \partial I \to S(W_3)$ can be extended to a $G$-map $H: X\times I \to S(W_3)$, that is,  $H$ is a $G$-homotopy between $h_0$ and $h_1$. In fact, if we have such a $G$-homotopy $H$, then we obtain a $G$-map $f_3=g_0\cup H\cup g_1$  by gluing $G$-maps.

Set $Y= S(V_{p,1}\oplus V_{p^2,1})$.
 We observe that there is a bijection
 $$\bar F :\text{Map}_G(X, S(W_3))\to \text{Map}_{C_p}(Y, S(W_3)),$$ which is defined by $$\bar F(\al)(z_1,z_2)= \al(1, (z_1, z_2))$$
 for $\al: X\to S(W_3)$.
 We also define a map
 $$\bar G:\text{Map}_{C_p}(Y, S(W_3)) \to
 \text{Map}_G(X, S(W_3))$$
by
$$\bar G(\beta)(u, (z_1, z_2)) =  u\cdot\beta(u^{-1}\cdot (z_1, z_2))$$
for a $C_p$-map $\beta: Y\to S(W_3)$.  One can see this as follows.
 \begin{assertion}\label{a1}
$\bar F(\al)$ is a $C_p$-map.  

 \end{assertion}
Note that $\xi_p\in S^1$ acts trivially on $Y$ and $S(W_3)$.
Then, 
\begin{align*}
\bar F(\al)(a\cdot(z_1, z_2)) &=  \al(1,\, a\cdot(z_1,z_2)) = \al(a\cdot \xi_p^{-1},\, a\cdot(z_1,z_2)) =a\cdot\al(\xi_p^{-1},(z_1, z_2))\\
&=a\cdot\al(\xi_p^{-1},\xi_p^{-1}\cdot(z_1, z_2))= a\xi_p^{-1}\cdot\al(1,(z_1, z_2))\\
&=a\cdot\al(1,(z_1, z_2)) = a\cdot \bar F(\al)(z_1, z_2).
\end{align*}

 \begin{assertion}\label{a2}
$\bar G(\beta)$ is a $G$-map.
\end{assertion}
In fact, 
\begin{align*}
\bar G(\beta)(t\cdot(u,(z_1, z_2))) &= \bar G(\beta)(tu,t\cdot(z_1, z_2))=(tu)\cdot \beta((tu)^{-1}\cdot(t\cdot(z_1, z_2))) \\
&= (tu)\cdot \beta(u^{-1}\cdot(z_1,z_2))
= t\cdot \bar G(\beta)(u,(z_1,z_2)), 
\end{align*}
and 
\begin{align*}
\bar G(\beta)(a\cdot(u,(z_1, z_2))) &=\bar G(\beta)(\xi_pu,\, a\cdot(z_1, z_2))) = (\xi_p u)\cdot \beta(u^{-1}\xi_p^{-1}a\cdot(z_1, z_2))\\
& = au \cdot \beta(u^{-1}\cdot(z_1, z_2)) =a\cdot \bar G(\beta)(u,z_1,z_2).
\end{align*}

 \begin{assertion}\label{a3}
 $\bar F^{-1}=\bar G$.
 \end{assertion}
 In fact, 
 \begin{align*}
 \bar G(\bar F(\al))(u,(z_1,z_2))&=u\cdot \bar F(\al)(u^{-1}\cdot (z_1,z_2)) \\
 &= u\cdot\al(1,\,u^{-1}\cdot (z_1,z_2))=\al(u, (z_1,z_2))
 \end{align*}
 and 
 $$\bar F(\bar G(\beta))(z_1, z_2) = \bar G(\beta)(1, z_1, z_2)
 = \beta(z_1,z_2).$$

Since a $G$-homotopy $\al_s$, $s\in I$, between $\al_0$ and $\al_1$ induces a $C_p$-homotopy $\bar F(\al_s)$ between $F(\al_0)$ and $F(\al_1)$. This is also true for $\bar G$. 
Thus $\bar F$ induces 
a bijection $$F: [X, S(W_3)]_G\to [Y, S(W_3)]_{C_p}$$
between equivariant homotopy sets.
Since $C_p$ acts freely on $Y$, 
\cite[Chapter II, \S4]{tD2} or \cite[Theorem 3.1]{Bo} shows that  
the degree of $C_p$-maps gives an injective map $$\deg: [Y, S(W_3)]_{C_p} \to [Y, S(W_3)]_{\{1\}}=[S^3,S^3]\cong \bZ.$$
Since $\bar F(h_0)$ is a constant map, it follows that $\deg F([h_0])=0$. On the other hand, $F([h_1])=[\tilde f_2]$ and $\deg \bar F(h_1)= \deg \tilde f_2=0$ by Remark \ref{r1}. Thus $\bar F(h_1)$ and $\bar F(h_1)$ are $C_p$-homotopic, and hence $h_0$ and $h_1$ are $G$-homotopic via $F$. Thus there exists a $G$-homotopy $H$ between $h_0$ and $h_1$.
\end{proof}

Next, assuming the existence of a $G$-map $f_{n-1}: S(V_{n-1})\to S(W_{m'})$ for some $m'\ge n-1\ge 3$, we show the existence of a $G$-map $f_{n}: S(V_{n})\to S(W_{m})$ for some $m\ge n$.
As in the case of $n=3$, we have a decomposition  
$$S(V_{n})= X_0\cup X\times I \cup X_1,$$ 
where
\begin{align*}
&X_0=S(V_{1,1})\times D(V_{p,1}\oplus\cdots\oplus V_{p^{n-1},1}),\\
&X= S(V_{1,1})\times S(V_{p,1}\oplus\cdots\oplus V_{p^{n-1},1})\\
&X_1= D(V_{1,1})\times S(V_{p,1}\oplus \cdots\oplus V_{p^{n-1},1}).
\end{align*}
 Similarly we have a $G$-map $g_0:X_0\to S(W_{m'+1})$ defined by $g_0(u, \bx) = (u^{p^{m'+1}}, 0)$ for $\bx\in Y=S(V_{p,1}\oplus \cdots\oplus V_{p^{n-1},1})$ and a $G$-map $g_1:X_1\to S(W_{m'+1})$
  defined by 
$g_1(u, \bx)= \tilde f_{n-1}(\bx)$, where $\tilde f_{n-1}$ is the lift of $f_{n-1}$ via $\pi: G\to G$, $ta^i\mapsto t^pa^i$.
Set $h_0=g_0|_{\partial X_0}:X\times \{0\}\to S(W_{m'+1})$ and $h_1=g_1|_{\partial X_1}:X\times \{1\}\to S(W_{m'+1})$. 
Then the following lemma holds.
\begin{lem}\label{l2-3}
There exists a $G$-map $R_{m}: S(W_{m'+1})\to S(W_{m})$ for some $m \ge m'+1$ such that $R_{m}\circ h_0$ and $R_{m}\circ h_1$ are $G$-homotopic.
\end{lem}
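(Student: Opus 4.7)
The plan is to run the argument of Proposition~\ref{p2-2} in general dimension, using equivariant obstruction theory in place of the equivariant Hopf theorem. The bijection $F:[X,S(W_m)]_G\to[Y,S(W_m)]_{C_p}$ with $Y=S(V_{p,1}\oplus\cdots\oplus V_{p^{n-1},1})$ is built by exactly the same formulas $\bar F,\bar G$ used in Proposition~\ref{p2-2}: only the $C_p$-freeness of $Y$ and the triviality of the $\xi_p$-action on $Y$ and on the target are used. Via $F$, the lemma becomes the statement that for some $m\ge m'+1$ and some $G$-map $R_m$, the $C_p$-map $R_m\circ\tilde f_{n-1}:Y\to S(W_m)$ is $C_p$-homotopic to the constant map at the $C_p$-fixed point $(1,0)$, because $\bar F(R_m\circ h_0)$ is precisely that constant map while $\bar F(R_m\circ h_1)=R_m\circ\tilde f_{n-1}$.

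I would take $R_m$ in the explicit form
\[ R_m(z,w)=\bigl(z^{p^{m-m'-1}},\,w^{1+pk}\bigr)\big/\bigl\|(z^{p^{m-m'-1}},w^{1+pk})\bigr\|, \]
which a direct check confirms is a $G$-map ($S^1$-equivariance forces the $z$-exponent to be $p^{m-m'-1}$, and $C_p$-equivariance forces the $w$-exponent to be $\equiv 1\pmod{p}$). As a non-equivariant self-map of $S^3$ it has degree $D:=p^{m-m'-1}(1+pk)$, and this $D$ can be made divisible by any preassigned positive integer: enlarging $m$ supplies arbitrary $p$-divisibility, while the Chinese remainder theorem solves $1+pk\equiv 0\pmod{N'}$ for any $N'$ coprime to $p$. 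Since $S^3=SU(2)$ is an H-space, post-composition with $R_m$ acts as multiplication by $D$ on every homotopy group $\pi_j(S^3)$.

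Now I would invoke equivariant obstruction theory. Since $C_p$ acts freely on $Y$, the set $[Y,S(W_m)]_{C_p}$ is in bijection with the homotopy classes of sections of the associated $S^3$-bundle $Y\times_{C_p}S(W_m)\to L$, where $L:=Y/C_p$ is the lens space $L^{2n-3}(p;1,\ldots,1)$ of dimension $2n-3\ge 5$ (since $n\ge 4$). The successive obstructions to homotopy between two sections lie in the cohomology groups $H^j\bigl(L;\pi_j(S^3)\bigr)$, $3\le j\le 2n-3$, with local coefficients coming from the $C_p$-action on $\pi_j(S^3)$. A standard lens space calculation gives $H^3(L;\bZ)=0$, so there is no primary obstruction in degree~$3$; and because $\pi_j(S^3)$ is a finite abelian group for every $j\ge 4$ while $L$ is a finite CW complex, every remaining obstruction group is finite. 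By H-space functoriality, post-composition with $R_m$ multiplies each of these obstruction classes by $D$. Let $N$ be the least common multiple of the exponents of this finite collection of obstruction groups (including the indeterminacies attached to the higher stages), and choose $m$ and $k$ as above so that $N\mid D$. Every obstruction is annihilated, $R_m\circ\tilde f_{n-1}$ becomes $C_p$-nullhomotopic, and the lemma follows.

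The main obstacle is the usual bookkeeping in an obstruction tower: the secondary and higher obstructions exist only after partial $C_p$-nullhomotopies have been chosen and are defined only modulo indeterminacy, so at each stage one has to verify that the naturality statement ``post-composition with $R_m$ is multiplication by $D$'' descends past that indeterminacy. This is forced by the H-space structure of $S^3$, which turns every functorial operation into multiplication by $D$, but the inductive verification through the $2n-3$ stages is the least routine part of the argument.
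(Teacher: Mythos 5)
Your setup is essentially the paper's: the reduction of the lemma to a $C_p$-homotopy question over the lens space $Y/C_p\cong L^{2n-3}(p)$ (the paper does this through Assertion \ref{a4} and the isomorphism $\fH_G^k(X;\pi_k)\cong H^k(Y/C_p;\pi_k)$ rather than through the bijection $F$, but it is the same reduction), the explicit $G$-maps $(z,w)\mapsto(z^{p^d},w^{1+pk})$ whose non-equivariant degree can be made divisible by prescribed integers, the vanishing of the degree-$3$ obstruction because $H^3(L^{2n-3}(p);\bZ)=0$ for $n\ge 4$, and the H-space argument that post-composition acts on $\pi_j(S^3)$ as multiplication by the degree. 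The divergence, and the gap, is at the final step: you fix a single $R_m$ in advance with degree $D$ divisible by the lcm $N$ of the exponents of all the obstruction groups and assert that ``post-composition with $R_m$ multiplies each obstruction class by $D$,'' hence kills them all at once. For the primary obstruction this is correct, but for the higher stages the statement has no clear meaning: the obstruction at stage $j$ is defined only after a partial $C_p$-nullhomotopy of $R_m\circ\tilde f_{n-1}$ over the $(j-1)$-skeleton has been chosen, and such a partial homotopy is in general \emph{not} of the form $R_m\circ(\text{partial nullhomotopy of }\tilde f_{n-1})$ --- indeed the obstructions for $\tilde f_{n-1}$ itself need not vanish, so there may be nothing for $R_m$ to push forward. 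You flag this as ``the least routine part,'' but it is exactly the point that needs an argument, and as stated the one-shot annihilation does not go through.

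The paper closes this by not fixing the target map in advance: having built a partial extension $\phi_k$ over the relative $k$-skeleton of $X\times I$, it chooses a \emph{new} $G$-map $Q(z,w)=(z^{p^d},w^{1+pl})$ whose degree is divisible by the exponent $e_k$ of the coefficient group $\pi_k(S^3)$ itself (not merely of the cohomology group), so that $Q_*=0$ on $\pi_k$; then the obstruction cocycle of $Q\circ\phi_k$ is identically zero and the extension proceeds, with $R_m$ emerging at the end as the composite of the $Q$'s. Your single $R_m$ can be salvaged along the same lines --- choose its degree divisible by the product of the exponents of $\pi_j(S^3)$ for $4\le j\le 2n-3$ and factor it as a composite of such $G$-maps, applying one factor at each stage of the tower --- but that stagewise factorization and application is precisely the missing verification, and once you write it down you have reproduced the paper's induction rather than a genuinely one-step argument.
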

If we have such a $G$-homotopy $H$ between $R_{m}\circ h_0$ and $R_{m}\circ h_1$, then we obtain a $G$-map $f_{n}= R_{m}\circ g_0\cup H\cup R_{m}\circ g_1: S(V_{n})\to S(W_{m})$. Thus, once Lemma \ref{l2-3} has been proved, the proof of Theorem \ref{t2-1} will be complete. \qed

\begin{proof}[Proof of Lemma {\rm \ref{l2-3}}]
In order to show the existence of a $G$-homotopy $H$, we use equivariant obstruction theory descried in \cite[Chapter II, (3.10)]{tD2}.  
Fix a $G$-CW complex structure of $X$. 
Let denote
$$\overline{(X\times I)}_{(k)} := X_{(k-1)}\times I\cup X\times \partial I$$
be the (equivariant) $k$-skeleton relative to $X\times \partial I$ of $X\times I$. 
Assume that there exists a $G$-map on $\overline{(X\times I)}_{(k)}$ extending $h_0$ and $h_1$.
The obstruction class for the existence of a $G$-map on $\overline{(X\times I)}_{(k+1)}$ extending $h_0$ and $h_1$ lies in the equivariant cohomology groups $$\fH_{G}^{k+1}(X\times (I,\partial I), \pi_k))\cong 
\fH_{G}^k(X, \pi_k),$$ where $\pi_k=\pi_k(S(W_{m'+1}))=\pi_k(S^3)$.
Note that $G$ acts trivially on $\pi_k$,  since every element $g\in G$ acts orientation preservingly on $S(W_{m'+1})$, and it is homotopic to the identity as maps on $S(W_{m'+1})$. 
\begin{assertion}\label{a4}
$X/S^1\cong_{C_p} Y$.
\end{assertion}

In fact, we define $\varphi: X/S^1\to Y$ by $\varphi([u, \bx])=u^{-1}\cdot \bx$ for $u\in S(V_{1,1})$ and $\bx\in Y$. It is easy to see that $\varphi$ is well-defined.  Then $\varphi$ is a $C_p$-map, in fact, $\varphi(a\cdot[u,\bx]) = \varphi([\xi_pu, a\cdot\bx]) = \xi_p^{-1}u^{-1}\cdot (a\cdot\bx) = a\cdot \varphi([u,\bx]).$ Note that $\xi_p$ acts trivially on a $G$-space $Y$.
Furthermore $\psi: Y\to X/S^1$; $\bx\mapsto [1, \bx]$ is the inverse of $\varphi$.
Thus it follows that $X/S^1\cong_{C_p} Y$.

By \cite[Chapter II, \S2]{tD2} and Assertion \ref{a4}, there are isomorphisms
$$\fH_{G}^k(X;\pi_k)\cong \fH_{C_p}^k(X/S^1;\pi_k)\cong
 \fH_{C_p}^k(Y;\pi_k)\cong H^k(Y/C_p;\pi_k).$$
Since  
$C_p$ acts freely on $Y$ and $Y/C_p$ is homeomorphic to a $(2n-3)$-dimensional lens space $L^{2n-3}(p)$, 
we have
$$\fH_{G}^k(X;\pi_k)\cong
\begin{cases}
0&k=0, \ k>2n-3\\
{}_p\pi_k:=\{x\in \pi_k\,|\,px=0\}&k:\text{odd}<2n-3\\
\pi_k/p\pi_k&k:\text{even}<2n-3\\
\pi_k&k=2n-3
\end{cases}.$$
Note that $S(W_{m})$ is $2$-connected and $\fH_{G}^3(X;\bZ)=0$ when $n\ge 4$. Consequently, $\fH_{G}^k(X;\pi_k)=0$ for $k\le 3$ when $n\ge 4$. Therefore, there exists a $G$-map $\phi_4: \overline{(X\times I)}_{(4)}\to S(W_{m'+1})$  extending $h_0$ and $h_1$.
We assume that a $G$-map $\phi_{k} :\overline{(X\times I)}_{(k)} \to S(W_{m_k})$ extending $R_{m_k}\circ h_0$ and $R_{m_k}\circ \phi_k$ is inductively constructed for $k\ge 4$, where $R_{m_k}: S(W_{m'+1})\to S(W_{m_k})$ is a $G$-map for $m_k\ge m'+1$.
The obstruction class for  
an extension $\phi_{k+1}$ on $\overline{(X\times I)}_{(k+1)}$ lies in 
$$\fH_{G}^{k}(X, \pi_{k}), $$
but this class does not necessarily vanish.
Note that $\pi_{k}=\pi_{k}(S^3)$ is a finite abelian group if $k\ge 4$. We define $e_{k}$ to be the exponent of $\pi_{k}$ for $k\ge 4$.

\begin{assertion}
Let $e_k= p^ds$, where $d\ge 0$ and $(p,s)=1$.
There exists a $G$-map $Q: S(W_{m_k})\to S(W_{m_k+d})$ such that $e_k$ divides $\deg Q$.
\end{assertion}
In fact, we take integers $l$ and $r$ such that 
$sr-pl=1$, and hence $sr=1+pl$ equivalently.
Define $Q$ by $Q(z,w)= (z^{p^d}, w^{1+pl})$. It is easy to see that $Q$ is a $G$-map and $\deg Q=p^d(1+pl)=p^dsr=re_k$. Thus $e_k$ divides $\deg Q$.

Since $S^3$ is a Hopf space (in fact, a Lie group),  for any map $f: S^3\to S^3$, 
$$f_*(\al) =[f]\circ \al = ((\deg f)\iota)\circ \al= (\deg f)\al$$
 for $\al\in \pi_k(S^3)$ by \cite[Corollaries (8.4) and (8.6)]{Wh}.
Therefore, we see that $Q_* =0: \pi_k\to \pi_k$, and hence
$Q_*=0: \fH_G^{k}(X; \pi_k)\to \fH_G^{k}(X; \pi_k)$.
Thus the obstruction for $Q\circ R_k\circ\phi_k$ vanishes, and by setting $m_{k+1}=m_k+d$ and $R_{m_{k+1}} = Q\circ R_{m_k}$, we obtain a $G$-map $\phi_{k+1}$ extending $R_{m_{k+1}}\circ h_0$ and $R_{m_{k+1}}\circ h_1$.
Repeating this procedure, we obtain a $G$-homotopy $H$ between $R_{m}\circ h_0$ and $R_{m}\circ h_0$ for some $m$.
\end{proof}

\section{Conclusions and remarks}
First we prove Corollary \ref{c1-2}.
\begin{proof}[Proof of Corollary {\rm \ref{c1-2}}]
The necessary condition  has already been shown in \cite{B}. If $G$ is an abelian compact Lie group other than $C_p^k$ and $T^k$, there exists a closed subgroup $H$ such that $G/H\cong S^1\times C_p$.
By lifting $f_n$ via $\pi:G\to G/H$, we obtain a $G$-map
$\tilde f_{n}: S(\tilde V_n)\to S(\tilde W_m)$.
This implies that $b_G(n)\le b_G(2n)\le 4$, and that $G$ does not have the weak Borsuk-Ulam property.
\end{proof}

We give some remarks on the Borsuk-Ulam function.
\begin{prop}
Let $G=S^1\times C_p$. If $p$ is an odd prime, then $b_G(n)=4$ when $n\ge 4$.
\end{prop}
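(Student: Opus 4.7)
The plan is to combine the upper bound from Theorem \ref{t2-1} with a Borsuk--Ulam style lower bound obtained by restricting to two well-chosen subgroups of $G=S^1\times C_p$. The upper bound is immediate: Theorem \ref{t2-1} supplies, for every $n\ge 1$, a $G$-map $S(V_{\lceil n/2\rceil})\to S(W_m)$ with $\dim V_{\lceil n/2\rceil}=2\lceil n/2\rceil\ge n$ and $\dim W_m=4$, so $b_G(n)\le 4$. By the monotonicity of $b_G$, only the lower bound $b_G(4)\ge 4$ remains, i.e.\ the statement that no $G$-map $S(V)\to S(W)$ can exist once $\dim V\ge 4$ and $W$ is $G$-fixed-point-free with $\dim W\le 3$.

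The representation theory of $G$ pins down the form of $W$: since $p$ is odd, every nontrivial real irreducible $G$-representation has real dimension $2$, so a $G$-fixed-point-free representation of real dimension at most $3$ must be a single copy of $V_{k,l}$ with $(k,l)\ne(0,0)$, and in particular has dimension exactly $2$. (This is where the hypothesis $p$ odd is used; for $p=2$ the sign representation provides a $1$-dimensional $G$-fixed-point-free target.) I would then split the analysis into two cases according to the behavior of $W$ under the elementary abelian subgroup $H:=\bZ_p\times C_p\subset G$ generated by $\xi_p$ and $a$.

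In Case A, $(k,l)\not\equiv(0,0)\pmod p$, so $W|_H$ is $H$-fixed-point-free. If $V^H=0$ then $V|_H$ is also $H$-fixed-point-free, and the Borsuk--Ulam theorem for the elementary abelian $p$-group $H$ yields $\dim V\le\dim W=2$, contradicting $\dim V\ge 4$. If $V^H\ne 0$, a $G$-map would restrict to an $H$-map sending the nonempty set $S(V)^H=S(V^H)$ into $S(W)^H=S(W^H)=\emptyset$, which is impossible. In Case B, $W^H=W$, forcing $l=0$ and $W=V_{pa,0}$ with $a\ne 0$; here $H$ gives no information, so I would instead restrict to $S^1\subset G$. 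Since $pa\ne 0$ we have $W^{S^1}=0$, and the identical dichotomy on $V^{S^1}$ applies using the classical $S^1$-Borsuk--Ulam theorem (Borsuk--Ulam for tori) in place of the $H$-version.

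I expect no serious obstacle beyond this case split. The essential conceptual point is that the two subgroups $H=\bZ_p\times C_p$ and $S^1$ together distinguish every admissible $W$: $H$ detects those $W=V_{k,l}$ with $(k,l)\not\equiv 0\pmod p$, and $S^1$ handles the remaining $W=V_{pa,0}$, so every $2$-dimensional $G$-fixed-point-free target is covered. The only mild care needed is in verifying that, in each case, the summand structure of $V$ forcing $V^K\ne 0$ (for $K=H$ or $S^1$) is incompatible with $W^K=0$, which rules out a map without any dimension count.
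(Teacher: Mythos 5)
Your proof is correct and follows essentially the same route as the paper: the upper bound comes from Theorem \ref{t2-1}, the parity of fixed-point-free representations for odd $p$ reduces the lower bound to excluding $2$-dimensional targets $W=V_{k,l}$, and one then restricts to a subgroup $K$ with $W^{K}=0$, forces $V^{K}=0$, and invokes the Borsuk--Ulam theorem for $K$. The only cosmetic difference is that you use $K=\bZ_p\times C_p$ (plus $S^1$ for $W=V_{pk,0}$) where the paper uses $C_p=\langle a\rangle$ and $S^1$ directly, and you spell out the dichotomy $V^{K}=0$ versus $V^{K}\ne 0$ that the paper leaves implicit.
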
 
\begin{proof}
We already know $b_G(n)\le 4$.
If $b_G(n) < 4$, then
$b_G(n) \le 2$, since the dimension of a fixed-point-free $G$-representation is even.
We show that there are no $G$-maps $f: S(V)\to S(W)$ if $\dim V\ge 4$ and $\dim W=2$.
In fact, we may suppose $W=V_{k,l}$ for some $(k,l)\ne (0,0)$ in $\bZ\times \bZ/p$.
Then $W^{S^1}=0$ or $W^{C_p}=0$, hence $V^{S^1}=0$ or $V^{C_p}=0$ respectively.
By the Borsuk-Ulam theorem for $S^1$ or $C_p$,  we have $\dim V\le \dim W$, which leads to a contradiction.
\end{proof}

\begin{rem}
One can see $b_G(3)=4$, $b_G(2)=2$ and $b_G(1)=2$ when $p$ is an odd prime.
\end{rem}

Next, let $G$ be a compact Lie group. The invariant $r_G(W)$ is defined by
$$r_G(W)=\sup\{\dim V\,|\,\exists\, f: S(V)\to S(W)\text{  $G$-map }\}\le\infty.$$
Here we assume that $G$-representations are fixed-point-free.

\begin{lem}\label{l3-2}
Let $H$ be a closed subgroup of $G$ and $W$ a $G$-representation $W$ with $W^H=0$. Then $r_G(W)\le r_H(W)$.
\end{lem}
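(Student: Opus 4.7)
The plan is to reduce the $G$-equivariant setting to the $H$-equivariant one by simply restricting the group action, after first showing that the hypothesis $W^H=0$ forces every relevant $V$ to satisfy $V^H=0$. This last point is what makes the restricted map fit into the definition of $r_H(W)$.

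First I would take an arbitrary fixed-point-free $G$-representation $V$ admitting a $G$-map $f:S(V)\to S(W)$, and restrict attention to $H$-fixed points. Since $f$ is in particular $H$-equivariant, it sends $H$-fixed points to $H$-fixed points, so $f(S(V)^H)\subseteq S(W)^H$. For a linear action the fixed set of the unit sphere is the unit sphere of the fixed subspace, so this becomes $f(S(V^H))\subseteq S(W^H)$. Using $W^H=0$ we have $S(W^H)=\emptyset$, which forces $S(V^H)=\emptyset$, i.e., $V^H=0$.

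Once $V^H=0$ has been established, $V$ is a fixed-point-free $H$-representation, and the same map $f$, now regarded purely as an $H$-map between spheres of fixed-point-free $H$-representations, is admissible in the definition of $r_H(W)$. Consequently $\dim V\le r_H(W)$. Taking the supremum over all such $V$ gives $r_G(W)\le r_H(W)$.

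There is no real obstacle here; the whole argument rests on the trivial but essential observation that equivariant maps between representation spheres must behave well on fixed point sets, and that $W^H=0$ is strong enough to propagate to $V^H=0$. The lemma is thus essentially immediate from the definitions together with this functoriality of fixed-point sets.
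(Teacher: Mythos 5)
Your argument is correct and is essentially the paper's own proof: restrict the $G$-map $f$ along $H\le G$ and observe that it then witnesses the bound for $r_H(W)$. The only (welcome) difference is that you make explicit the check that $W^H=0$ forces $V^H=0$, which the paper leaves implicit, and you take the supremum over all admissible $V$ directly instead of splitting into the cases $r_G(W)<\infty$ and $r_G(W)=\infty$ as the paper does.
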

\begin{proof}
Assume $r_G(W)<\infty$. Let $f: S(V)\to S(W)$ be a $G$-map with $r_G(W)=\dim V$. Then $\Res_H f: S(V)\to S(W)$ is an $H$-map with $W^H=0$.
Therefore $r_H(W) \ge \dim V=r_G(W)$.
If $r_G(W)=\infty$, then there exists a $G$-map $f: S(V)\to S(W)$ with an arbitrarily large dimension $\dim V$.  Consequently, $\Res_H f: S(V)\to S(W)$ is an $H$-map with an arbitrarily large dimension $\dim \Res_HV$. Therefore $r_H(W)=\infty$.
\end{proof}

By Bartsch's results of \cite{B}, if $G$ is not $p$-toral, then there exists a $G$-representation $W$ with $W^G=0$ such that $r_G(W)=\infty$. 
Furthermore, if $G$ has the weak Borsuk-Ulam property, then $r_G(W)<\infty$ for any $G$-representation $W$ with $W^G=0$.
On the other hand, we observe
\begin{prop}\label{p3-2}
If $G$ is a $p$-toral group, then $r_G(W)<\infty$ for every $G$-representation $W$ with $W^G=0$.
\end{prop}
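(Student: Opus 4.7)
The plan is to reduce to the case where $G$ is a finite $p$-group, for which Bartsch's Theorem 1 in \cite{B} directly applies. Concretely, I would produce a finite $p$-subgroup $H\le G$ with $W^H=0$; Lemma \ref{l3-2} then gives $r_G(W)\le r_H(W)$, and the weak Borsuk-Ulam property of $H$ forces $r_H(W)<\infty$ (for if $r_H(W)=\infty$, there would be $H$-maps $S(V)\to S(W)$ with $\dim V$ unbounded, yielding $b_H(n)\le\dim W$ for arbitrarily large $n$, contradicting $b_H(n)\to\infty$).

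To build $H$, write $T=G^\circ\cong T^k$ and $P=G/T$, which is a finite $p$-group, and let $T_n\subset T$ denote the $p^n$-torsion subgroup, isomorphic to $(C_{p^n})^k$. The descending chain $W^{T_1}\supseteq W^{T_2}\supseteq\cdots$ of subspaces stabilizes by finite-dimensionality, and since $\bigcup_n T_n$ is dense in $T$ while $W^T$ is closed, the stable value equals $W^T$; fix $N$ with $W^{T_N}=W^T$.

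Next I would invoke the classical fact (due to Mostow) that every compact Lie group contains a finite subgroup $F$ with $FT=G$, and let $S$ be a Sylow $p$-subgroup of $F$. A routine Sylow-order count, using that $F\cap T$ is normal in $F$ with quotient $P$ of $p$-power order, shows $S(F\cap T)=F$, hence $ST=G$, so $S$ surjects onto $P$. Setting $H:=ST_N$: since $T_N$ is characteristic in $T\triangleleft G$ it is normal in $G$, so $S$ normalizes $T_N$ and $H$ is a subgroup, finite of $p$-power order. Then $W^H=0$ is immediate: any $v\in W^H$ lies in $W^{T_N}=W^T$, so the $G$-action on $v$ factors through $P$; being $S$-fixed with $S$ surjecting onto $P$ then forces $v\in(W^T)^P=W^G=0$.

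The main (and only) non-elementary ingredient is Mostow's theorem on finite subgroups meeting every component of a compact Lie group. For the abelian case used in Corollary \ref{c1-2}, this input is unnecessary: the structure theorem for abelian compact Lie groups shows the extension $1\to T\to G\to P\to 1$ splits, and one may simply take $H=T_N\times P$.
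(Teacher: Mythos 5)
Your argument is correct, and its overall skeleton is the same as the paper's: produce a finite $p$-subgroup $H\le G$ with $W^H=0$, apply Lemma \ref{l3-2} to get $r_G(W)\le r_H(W)$, and conclude by Bartsch's theorem that finite $p$-groups have the weak Borsuk--Ulam property. The difference lies in how the finite $p$-subgroup is obtained. The paper simply quotes tom Dieck \cite[Chapter IV (3.8), 3)]{tD2}: a $p$-toral $G$ is the Hausdorff limit of an increasing family of finite $p$-subgroups $P_n$, and $W^G=0$ forces $W^{P_n}=0$ for $n$ large. You instead build $H$ by hand: the $p^N$-torsion $T_N\subset T=G^{\circ}$ chosen so that $W^{T_N}=W^T$ (correct, via stabilization of the chain and density of $p$-power torsion in the torus), together with a Sylow $p$-subgroup $S$ of a finite subgroup $F$ with $FT=G$; your Sylow count giving $ST=G$, the normality of $T_N$ in $G$, and the verification $W^{ST_N}\subseteq (W^T)^S=W^G=0$ are all sound. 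What each route buys: the paper's is a one-line citation, while yours is essentially self-contained except for the existence of a finite subgroup meeting every component -- a true classical fact, though the attribution to Mostow is debatable, and in the $p$-toral setting it can also be extracted from the same cohomological circle of ideas (the extension class in $H^2(P;T)$ is $|P|$-torsion, hence comes from a finite subgroup of $T$), or simply from the reference the paper uses. Your closing remark that for the abelian case needed in Corollary \ref{c1-2} the extension splits, so $H=T_N\times P$ suffices, is a genuinely elementary shortcut worth noting.
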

\begin{proof}
Suppose that $G$ has an extension
$$1\to T^k\to G\to P\to 1,$$
where $P$ is a finite $p$-group.
By \cite[Chapter IV (3.8), 3)]{tD2}, there exists a sequence $\{P_n\}$ of finite $p$-subgroups of $G$ such that $\lim_{n\to \infty} P_n = G$ with respect to the Hausdorff metric.
For a sufficiently large $n$, it follows from $W^G=0$ that $W^{P_n}=0$. 
By Lemma \ref{l3-2}, we have $r_G(W)\le r_{P_n}(W)$.
Since a finite $p$-group has the weak Borsuk-Ulam property, we have $r_{P_n}(W)<\infty$ and thus $r_G(W)<\infty$.
\end{proof}
 
Even though the finiteness of $r_G$ holds for $G=S^1\times C_p$, this group $G$ does not have the weak Borsuk-Ulam property. One reason for this is the existence of infinitely many representations of the same dimension.

\noindent Ikumitsu Nagasaki\\
Department of Mathematics\\
Kyoto Prefectural University of Medicine\\
1-5 Shimogamo Hangi-cho\\  
Sakyo-ku 606-0823, Kyoto\\
Japan\\  
email : nagasaki@koto.kpu-m.ac.jp

\end{document}